\DeclareMathOperator{\Tr}{Tr}
 \newtheorem{thm}{Theorem}[section]
 \newtheorem{cor}[thm]{Corollary}
 \newtheorem{lem}[thm]{Lemma}
 \theoremstyle{definition}
 \newtheorem{rem}[thm]{Remark}
 \newtheorem*{ack}{Acknowledgments}
 \numberwithin{equation}{section}
\numberwithin{equation}{section}
\newcounter{rom}
\renewcommand{\therom}{(\roman{rom})}
{\end{list}}
\title[Sobolev inequalities]{Sobolev inequalities in manifolds with nonnegative intermediate Ricci curvature}
\begin{document}

\author{Hui Ma}
\address{Department of Mathematical Sciences, Tsinghua University,
Beijing 100084, P.R. China} 
\email{ma-h@mail.tsinghua.edu.cn}
\author{Jing Wu*}
\email{wu-j19@mails.tsinghua.edu.cn}

\begin{abstract}
We prove Michael-Simon type Sobolev inequalities for $n$-dimensional submanifolds in $(n+m)$-dimensional Riemannian manifolds with nonnegative $k$-th intermediate Ricci curvature by using the Alexandrov-Bakelman-Pucci method. Here $k=\min(n-1,m-1)$.
These inequalities extend Brendle's Michael-Simon type Sobolev inequalities on Riemannian manifolds with nonnegative sectional curvature \cite{Brendle22}  and Dong-Lin-Lu's   Michael-Simon type Sobolev inequalities on Riemannian manifolds with asymptotically nonnegative sectional curvature  \cite{DLL22} to the $k$-Ricci curvature setting. In particular, a simple application of these inequalities gives rise to some isoperimetric inequalities for minimal submanifolds in Riemannian manifolds.
\end{abstract}

\keywords {Isoperimetric Inequality, Michael-Simon Inequality, Intermediate Ricci Curvature, Minimal Submanifold}

\subjclass[2020]{53C40, 53C21.}

\maketitle

\section{Introduction}\label{sec1}
The classical isoperimetric problem is to find the largest possible area for a planar domain with given perimeter. The attendant isoperimetric inequality also has a long history and has been developed in many different settings. One of the intriguing directions is to prove isoperimetric inequality for minimal surfaces (cf. \cite{Carleman21, Reid59, OS75, Feinberg77, Chavel78, LSY84}). When we turn to minimal submanifolds in $\mathbb{R}^{n+1}$, the isoperimetric inequality is closely related to the famous Michael-Simon Sobolev inequality (cf. \cite{MS73}). In a recent breakthrough, inspired by the Alexandrov-Bakelman-Pucci technique in the proof of isoperimetric inequality (cf. \cite{Cabre08, Trudinger94}), Brendle \cite{Brendle21} proved an elegant Michael-Simon-type inequality. When the codimension is at most $2$, this solves the 
long-standing conjecture that the (sharp) isoperimetric inequality holds for minimal submanifolds in $\mathbb{R}^{n+1}$. Moreover, Brendle \cite{Brendle22} generalized the Michael-Simon type inequality as well as the isoperimetric inequality to minimal submanifolds in Riemannian manifolds with nonnegative sectional curvature.
For recent progress about isoperimetric inequality for minimal submanifolds, we refer to \cite{Choe05, Brendle21, Brendle22} and references therein.

Brendle's work \cite{Brendle22} has been extended to several different curvature settings. For example, Johne \cite{Johne21} considered the case of nonnegative Bakry-Émery Ricci curvature and Dong-Lin-Lu \cite{DLL22} considered the case of asymptotically nonnegative curvature. In this paper, we focus on the intermediate Ricci curvature (or simply $k$-Ricci curvature for the $k$-th intermediate Ricci curvature), which can be regarded as the average of some sectional curvatures. To the best of our knowledge, the notion of $k$-Ricci curvature was introduced by Bishop and Crittenden (cf. \cite{BC64}, p.253). Since the $k$-Ricci curvature interpolates between the sectional curvature and the Ricci curvature, it is natural to consider $k$-Ricci bounds as a weaker curvature condition instead of the sectional curvature bound.
Some early results involving $k$-Ricci bounds were obtained by Galloway \cite{Galloway81}, Wu \cite{Wu87} and Shen \cite{Shen89}, etc.
Recently there has been an increasing interest in the relations between $k$-Ricci bounds and the topology and geometry of manifolds (cf. \cite{XG12, Chahine20, Mouille22, RW22}).
Remark that the link between  intermediate Ricci curvatures and optimal transport was discussed in \cite{KM18, wang22}. Notice that the definition of intermediate Ricci curvature there is different from the above usual one.

One of our main results is the following theorem which extends Theorem 1.4 in \cite{Brendle22} to the $k$-Ricci setting.
\begin{thm}\label{thm1}
Let $M$ be a complete noncompact $(n+m)$-dimensional Riemannian manifold with nonnegative $k$-Ricci curvature, where $k=\min(n-1,m-1)$. Let $\Sigma$ be a compact $n$-dimensional  submanifold of $M$ (possibly with boundary $\partial\Sigma$), and let $f$ be a positive smooth function on $\Sigma$. If $m\ge 2$, then
\begin{equation}
    \int_\Sigma \sqrt{|D^\Sigma f|^2+f^2 |H|^2}+\int_{\partial\Sigma}f \ge n \left(\frac{(n+m)|B^{n+m}|}{m|B^m|}\right)^{\frac{1}{n}}\theta^{\frac{1}{n}}\left(\int_\Sigma f^{\frac{n}{n-1}}\right)^{\frac{n-1}{n}},
\end{equation}
where $\theta$ denotes the asymptotic volume ratio of $M$ and $H$ denotes the mean curvature vector of $\Sigma$.
\end{thm}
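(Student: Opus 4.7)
The strategy is to adapt Brendle's Alexandrov--Bakelman--Pucci (ABP) argument from \cite{Brendle21,Brendle22}, the essential new ingredient being a Jacobian estimate tailored to the hypothesis $\mathrm{Ric}_k\ge 0$ with $k=\min(n-1,m-1)$.

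After rescaling $f$, we may normalize so that both sides involve the same homogeneous integral of $f$, and solve on $\Sigma$ the Neumann problem
\[
\mathrm{div}_\Sigma(fD^\Sigma u)=n\beta f^{n/(n-1)}-\sqrt{|D^\Sigma f|^2+f^2|H|^2},\qquad \langle D^\Sigma u,\nu\rangle=1 \text{ on } \partial\Sigma,
\]
with $\beta:=\bigl(\tfrac{(n+m)|B^{n+m}|}{m|B^m|}\bigr)^{1/n}\theta^{1/n}$; the compatibility condition coincides with the normalization. The next step is the ABP transport map
\[
\Phi:U\times\overline{B^m(0,1)}\to M,\qquad \Phi(x,y):=\exp_x\bigl(D^\Sigma u(x)+f(x)y\bigr),
\]
where $y\in\overline{B^m(0,1)}$ is identified with a vector in $T^\perp_x\Sigma$ via a local orthonormal frame and $U\subset\Sigma\setminus\partial\Sigma$ is the contact set determined by minimizing $\tfrac12 d(\bar\xi,\exp_x(f(x)y))^2-u(x)$ over $\Sigma$ as $\bar\xi$ varies in $M$. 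The Neumann condition places these minima in the interior of $\Sigma$. Brendle's argument then shows that $\Phi$ covers every sufficiently large geodesic ball around any fixed basepoint, and dividing by $r^{n+m}$ brings in the asymptotic volume ratio:
\[
\theta\,|B^{n+m}|\;\le\;\lim_{r\to\infty}\frac{1}{r^{n+m}}\int_{U\times B^m(0,1)}|\det D\Phi|\,\mathrm{d}x\,\mathrm{d}y.
\]

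The crux is the pointwise bound on $|\det D\Phi|(x,y)$. At such a point, $D\Phi$ factors through the Jacobi propagator $\mathcal J(1)$ of the geodesic $t\mapsto\exp_x(t(D^\Sigma u(x)+f(x)y))$ composed with an $(n+m)\times(n+m)$ algebraic matrix built from $D^2_\Sigma u$, the second fundamental form of $\Sigma$, and $f$. In Brendle's sectional-curvature setting, Rauch comparison yields the clean bound $\det\mathcal J(1)\le 1$, which is unavailable here; instead one must use the \emph{trace-type} Jacobi comparison delivered by $\mathrm{Ric}_k\ge 0$, namely that for every $(k+1)$-dimensional subbundle of $\dot\gamma^\perp$ the trace of $R(\cdot,\dot\gamma)\dot\gamma$ is nonnegative. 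The dimensional balance $k+1=\min(n,m)$ is precisely what allows one to select such subbundles straddling tangential and normal directions, so that the resulting trace inequality combines cleanly with AM--GM on the eigenvalues of the algebraic factor. Feeding in the PDE identity solved above yields a pointwise bound which, integrated over $U\times\overline{B^m(0,1)}$ and combined with the previous display, produces the desired inequality with sharp constant.

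The main obstacle is precisely this Jacobian bound: under only $\mathrm{Ric}_k\ge 0$, individual sectional curvatures need not be nonnegative, so $\det\mathcal J(1)\le 1$ has to be replaced by an averaged estimate, and orchestrating AM--GM to cooperate with the mixed tangential/normal decomposition is the technical heart of the proof. Once this is in place, the remaining items -- regularity of $u$ on the contact set, interior localization via the Neumann condition, and the passage $r\to\infty$ -- are routine adaptations of Brendle's framework.
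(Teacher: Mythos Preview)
Your overall architecture---ABP method, Neumann problem for $u$, transport map into $M$, covering large balls, sending $r\to\infty$ to extract $\theta$---is the same as the paper's. But two things need repair.

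\textbf{Setup.} The transport map you wrote, $\Phi(x,y)=\exp_x(D^\Sigma u(x)+f(x)y)$ with $y\in\overline{B^m(0,1)}$, is the Euclidean map from \cite{Brendle21}; it carries no scale parameter, so your displayed limit $\lim_{r\to\infty}r^{-(n+m)}\int|\det D\Phi|$ is identically zero. In the Riemannian case the correct object is the one-parameter family $\Phi_r(x,y)=\exp_x\bigl(r\,D^\Sigma u(x)+r\,y\bigr)$ on the contact set $A_r\subset\{(x,y):|D^\Sigma u(x)|^2+|y|^2<1\}$; one shows $\Phi_r(A_r)$ covers the annulus $\{\sigma r<d(\cdot,\Sigma)<r\}$, then sends $r\to\infty$ and $\sigma\to 1$. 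There is no $f(x)$ weight on the normal variable. Likewise, the contact set is defined via $ru(x)+\tfrac12 d(x,\exp_{\bar x}(\cdots))^2$, not the expression you wrote. These are fixable, but as written the plan does not parse.

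\textbf{The Jacobian estimate.} This is where the genuine gap lies. You correctly flag it as the crux, but the description---``select subbundles straddling tangential and normal directions'' and ``AM--GM on the eigenvalues of the algebraic factor''---does not match what is actually needed, and it is not clear it can be made to work. The paper proceeds as follows. With $P(t)$ the matrix of Jacobi fields in a parallel frame and $Q(t)=P(t)^{-1}P'(t)$, one splits the trace into a tangential block $\sum_{i=1}^n Q_{ii}$ and a normal block $\sum_{\alpha=n+1}^{n+m}Q_{\alpha\alpha}$, and applies Cauchy--Schwarz to each block in the Riccati equation. This reduces everything to the two sign conditions
\[
\sum_{i=1}^n S_{ii}(t)\ge 0,\qquad \sum_{\alpha=n+1}^{n+m} S_{\alpha\alpha}(t)\ge 0,\qquad S_{AB}(t):=\bar R(\bar\gamma'(t),E_A(t),\bar\gamma'(t),E_B(t)).
\]
The difficulty is that $\bar\gamma'(t)$ is \emph{neither} tangent \emph{nor} normal to the parallel translate of $T_{\bar x}\Sigma$, so neither sum is a $k$-Ricci curvature as it stands. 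The key idea---absent from your sketch---is a rotation: take $e_1\parallel D^\Sigma u(\bar x)$, $e_{n+1}\parallel\bar y$, so that $\bar\gamma'(t)=a(\cos s\,E_1(t)+\sin s\,E_{n+1}(t))$ for some angle $s$, and let $\xi(t)$ be the orthogonal rotation. Then $S_{11}(t)=\sin^2 s\,\bar R(\bar\gamma',\tfrac1a\xi,\bar\gamma',\tfrac1a\xi)$, and one obtains
\[
\sum_{i=1}^n S_{ii}(t)=\cos^2 s\sum_{i=2}^n\bar R(\bar\gamma',E_i,\bar\gamma',E_i)+\sin^2 s\Bigl(\bar R(\bar\gamma',\tfrac1a\xi,\bar\gamma',\tfrac1a\xi)+\sum_{i=2}^n\bar R(\bar\gamma',E_i,\bar\gamma',E_i)\Bigr),
\]
a convex combination of an $(n-1)$-Ricci curvature and an $n$-Ricci curvature of $\bar\gamma'$, both taken over subspaces orthogonal to $\bar\gamma'$. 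Since $k=\min(n-1,m-1)\le n-1$, the hypothesis $\mathrm{Ric}_k\ge 0$ forces both to be nonnegative. The normal sum is handled symmetrically using $k\le m-1$. Once these two inequalities are in hand, the scalar Riccati comparison gives
\[
|\det\bar D\Phi_r(\bar x,\bar y)|\le r^m\bigl(1+\tfrac{r}{n}(\Delta_\Sigma u-\langle H,\bar y\rangle)\bigr)^n\le r^m\bigl(1+r f(\bar x)^{1/(n-1)}\bigr)^n,
\]
and the rest is routine. This rotation trick is precisely the new ingredient beyond \cite{Brendle22}, and your plan does not supply it.
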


By putting $f\equiv1$ and $H\equiv 0$ in Theorem \ref{thm1}, we obtain an isoperimetric inequality.

\begin{cor}
Let $M$ be a complete noncompact $(n+m)$-dimensional Riemannian manifold with nonnegative $k$-Ricci curvature, where $k=\min(n-1,m-1)$. Let $\Sigma$ be a compact minimal $n$-dimensional submanifold of $M$ with boundary $\partial\Sigma$. If $m\ge 2$, then
\begin{equation}
    |\partial\Sigma|\ge n \left( \frac{(n+m)|B^{n+m}|}{m|B|^m}\right)^{\frac{1}{n}}\theta^{\frac{1}{n}}|\Sigma|^{\frac{n-1}{n}},
\end{equation}
where $\theta$ denotes the asymptotic volume ratio of $M$.
In particular, if $m=2$, then
\begin{equation}
    |\partial\Sigma|\ge n |B^n|^{\frac{1}{n}}\theta^{\frac{1}{n}}|\Sigma|^{\frac{n-1}{n}}.
\end{equation}
\end{cor}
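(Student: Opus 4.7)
The plan is to derive the corollary as a direct specialization of Theorem \ref{thm1}. I would take the test function to be $f \equiv 1$, so that $D^\Sigma f \equiv 0$ and the integrand on the left-hand side of the Michael--Simon inequality reduces to $f|H| = |H|$. The minimality hypothesis $H \equiv 0$ then kills the interior integral entirely, leaving $\int_{\partial\Sigma} 1 = |\partial\Sigma|$ on the left. On the right, $\int_\Sigma f^{n/(n-1)} = |\Sigma|$, so the first displayed inequality of the corollary falls out of Theorem \ref{thm1} with no further work.

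For the second assertion (the special case $m=2$), I would verify the algebraic identity
\[
\frac{(n+2)|B^{n+2}|}{2|B^2|} = |B^n|.
\]
This is a one-line Gamma-function computation using $|B^k| = \pi^{k/2}/\Gamma(k/2+1)$ together with $\Gamma(n/2+2) = (n/2+1)\Gamma(n/2+1)$, which telescopes the factor $n+2$ against the denominator. Substituting this simplification into the general inequality recovers the $m=2$ form.

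I do not anticipate any real obstacle: the corollary is essentially a corollary in the strictest sense. All the geometric content --- the Alexandrov--Bakelman--Pucci argument, the intermediate-Ricci volume comparison, and the asymptotic-volume-ratio limit --- is already packaged inside Theorem \ref{thm1}. What remains here is only a choice of test function, invocation of the minimality hypothesis, and a short volume-of-unit-ball computation.
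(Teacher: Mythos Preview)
Your proposal is correct and matches the paper's approach exactly: the paper derives the corollary simply by substituting $f\equiv 1$ and $H\equiv 0$ into Theorem~\ref{thm1}, without giving a separate argument. Your additional verification of the identity $(n+2)|B^{n+2}|/(2|B^2|)=|B^n|$ for the $m=2$ case is more explicit than what the paper writes, but is indeed the intended computation.
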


\begin{rem}
We should mention that Wang \cite{wang22} recently provided an optimal transport proof of the Michael-Simon inequality using a different definition for the intermediate Ricci curvature.
\end{rem}

Another main result is a generalization of Theorem \ref{thm1}, which extends Theorem 1.5 in \cite{DLL22} to the case that the ambient manifold has asymptotically nonnegative $k$-Ricci curvature.

\begin{thm}\label{thm2}
Let $M$ be a complete noncompact $n+m$ dimensional Riemannian manifold of asymptotically nonnegative $k$-Ricci curvature with respect to a base point $o$ in $M$, where $k=\min(n-1,m-1)$. Let $\Sigma$ be a compact $n$-dimensional submanifold of $M$ (possibly with boundary $\partial\Sigma$), and let $f$ be a positive smooth function on $\Sigma$. If $m\ge 2$, then
\begin{equation}
\begin{aligned}
    &\int_\Sigma \sqrt{|D^\Sigma f|^2+f^2 |H|^2} + 2n b_1\int_\Sigma f + \int_{\partial\Sigma}f \\
    &\ge n \left(\frac{(n+m)|B^{n+m}|}{m|B^m|}\right)^{\frac{1}{n}}\theta_h^{\frac{1}{n}} \left(\frac{1+b_0}{e^{2r_0b_1+b_0}}\right)^{\frac{n+m-1}{n}} \left(\int_\Sigma f^{\frac{n}{n-1}}\right)^{\frac{n-1}{n}},
\end{aligned}
\end{equation}
where $r_0=\max\{d(o,x)|x\in \Sigma\}$, $\theta_h$ denotes the asymptotic volume ratio of $M$ with respect to $h$, $H$ denotes the mean curvature vector of $\Sigma$ and $b_0$, $b_1$ are defined by \eqref{defb0} and \eqref{defb1}.
\end{thm}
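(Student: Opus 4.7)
The plan is to adapt the ABP scheme used to establish Theorem \ref{thm1} by inserting warping-function corrections that reflect the asymptotic nonnegativity of the $k$-Ricci curvature. After a scaling normalization of $f$, I would fix a constant $\beta$ (to be chosen so that the Neumann problem is compatible) and solve
\begin{equation*}
\mathrm{div}_\Sigma(f\,\nabla^\Sigma u) = \beta f - \sqrt{|D^\Sigma f|^2+f^2|H|^2} \qquad \text{on } \Sigma,\qquad \langle \nabla^\Sigma u,\nu\rangle=1 \text{ on } \partial\Sigma,
\end{equation*}
and then define the normal transport map $\Phi:T^\perp\Sigma\to M$ by $\Phi(x,y)=\exp_x(\nabla^\Sigma u(x)+y)$. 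As in Brendle's approach, the contact set
\begin{equation*}
T=\bigl\{(x,y):x\in\Sigma\setminus\partial\Sigma,\ y\in T_x^\perp\Sigma,\ |\nabla^\Sigma u(x)|^2+|y|^2<1,\ D^2u+\langle y, II\rangle\succeq 0\bigr\}
\end{equation*}
is the object on which the Jacobian estimate will be carried out.

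The next step is the Jacobian estimate along the geodesic $t\mapsto \exp_x(t(\nabla^\Sigma u(x)+y))$. Using the same Jacobi-field/AM--GM argument as in the proof of Theorem \ref{thm1}, the ambient contribution can be bounded in terms of sums of $k+1$ eigenvalues of the Hessian of the distance function, which is precisely what the $k$-Ricci bound controls. In the asymptotically nonnegative setting, the Riccati comparison is against the warping function $h$ solving $h''=\lambda h$ with $h(0)=0$, $h'(0)=1$ rather than against the flat model; this introduces factors of $h(t)^{n+m-1}$ into the Jacobian bound and, after integration, the factor $\left(\tfrac{1+b_0}{e^{2r_0b_1+b_0}}\right)^{(n+m-1)/n}$ coming from the uniform estimates on $h$ and $h'/h$ on $[0,r_0+1]$ governed by $b_0,b_1$. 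Applying AM--GM to the radial and transverse eigenvalues produces the bound
\begin{equation*}
|\det D\Phi|\leq \Bigl(\tfrac{1}{n}\bigl(\beta+2nb_1\bigr)\Bigr)^n\cdot (\text{warping correction}),
\end{equation*}
where the additional $2nb_1$ is exactly the amount by which $\tfrac{h'}{h}$ can exceed the flat $\tfrac{1}{t}$ along geodesics of length $\le r_0+1$.

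Having this, I would use the fact that $\Phi(T)$ contains, up to a set whose measure is controlled by the asymptotic volume ratio, the ball $B_r(\Phi(x_0,y_0))$ for $r$ large; letting $r\to\infty$ and using the asymptotic volume ratio $\theta_h$ (defined with respect to $h$), the volume of $\Phi(T)$ is bounded from below by $\theta_h\cdot |B^{n+m}|\cdot (\text{warping factor})$. On the other hand, a change of variables gives
\begin{equation*}
\int_{\Phi(T)}1 \leq \int_T |\det D\Phi|\,dx\,dy,
\end{equation*}
and integrating the AM--GM bound over the fibers $\{|y|^2<1-|\nabla^\Sigma u|^2\}$ introduces the factor $\tfrac{|B^m|}{n+m}$. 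Substituting the divergence equation to replace $\beta$ by $\tfrac{1}{\int_\Sigma f}\bigl(\int_\Sigma\sqrt{|D^\Sigma f|^2+f^2|H|^2}+\int_{\partial\Sigma}f\bigr)$ and rearranging yields the stated inequality, with the summand $2nb_1\int_\Sigma f$ produced exactly by the $2nb_1$ correction in the Jacobian.

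The main obstacle I expect is the careful bookkeeping in the Jacobian comparison: unlike the pure sectional-curvature case of \cite{DLL22}, only sums of $k+1$ sectional curvatures can be controlled, so the Riccati comparison must be implemented along a carefully chosen $(k+1)$-plane (radial direction plus $k$ orthonormal parallel fields) at each point, and the warping correction must be factored so that the \emph{same} warping function $h$ controls both the radial and the transverse Jacobi fields simultaneously. Ensuring that the AM--GM step produces exactly the exponent $\tfrac{n+m-1}{n}$ on $\tfrac{1+b_0}{e^{2r_0b_1+b_0}}$, rather than a larger exponent that would weaken the inequality, is the delicate combinatorial point and will require the choice $k=\min(n-1,m-1)$ in an essential way.
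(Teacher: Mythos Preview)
Your overall instinct to adapt the ABP scheme is right, but two concrete pieces of the proposal would not go through as written. First, the auxiliary PDE and the scale parameter. With right-hand side $\beta f-\sqrt{|D^\Sigma f|^2+f^2|H|^2}$ the analogue of Lemma~\ref{lemma1} yields only $\Delta_\Sigma u-\langle H,y\rangle\le\beta$, so after the Jacobian estimate one compares the asymptotic volume with $|\Omega|\cdot\beta^n$ rather than with $\int_\Sigma f^{n/(n-1)}$; the stated inequality cannot be recovered from this. In the paper the $2nb_1$ term is built directly into the equation,
\[
\mathrm{div}_\Sigma(fD^\Sigma u)=nf^{\frac{n}{n-1}}-\sqrt{|D^\Sigma f|^2+f^2|H|^2}-2nb_1 f,
\]
so that the pointwise bound reads $\Delta_\Sigma u-\langle H,y\rangle\le nf^{1/(n-1)}-2nb_1$ (Lemma~\ref{lemma1'}); the $-2nb_1$ is exactly absorbed by the $\psi_2/\psi_1$ term in the ODE comparison, and the remaining $nf^{1/(n-1)}$ is what produces $\int_\Sigma f^{n/(n-1)}$. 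Relatedly, your $\Phi$ and $T$ carry no scale, yet you later ``let $r\to\infty$'': the map must be $\Phi_r(x,y)=\exp_x\bigl(r(D^\Sigma u(x)+y)\bigr)$ with an $r$-dependent contact set $A_r$; one uses Lemma~\ref{lemma2}, divides by $(n+m)\int_0^r h^{n+m-1}$, and only then sends $r\to\infty$ to extract $\theta_h$.

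Second, the mechanism by which the $k$-Ricci hypothesis enters is not the one you describe, and your ``$(k+1)$-plane'' idea is unlikely to close the estimate. The paper keeps the full tangential/normal splitting of $\Tr Q$ into $\sum_{i=1}^n Q_{ii}$ and $\sum_{\alpha=n+1}^{n+m} Q_{\alpha\alpha}$ and runs two separate Riccati comparisons. The key observation is that $\bar\gamma'(t)\in\mathrm{span}\{E_1(t),E_{n+1}(t)\}$; writing $\bar\gamma'=a(\cos s\,E_1+\sin s\,E_{n+1})$ one gets $\sum_i S_{ii}$ as a convex combination of an $(n-1)$-Ricci and an $n$-Ricci term (and analogously $\sum_\alpha S_{\alpha\alpha}$ with $m-1,m$). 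The assumption $k=\min(n-1,m-1)$ is exactly what makes all four of these intermediate Ricci curvatures inherit the asymptotic lower bound, yielding $\sum_i S_{ii}\ge -(n-\cos^2 s)\lambda$ and $\sum_\alpha S_{\alpha\alpha}\ge -(m-\sin^2 s)\lambda$. The exponent $n+m-1$ on the correction factor then comes from the elementary identity $(n-\cos^2 s)+(m-\sin^2 s)=n+m-1$, not from an AM--GM step; after the two ODE comparisons and the bounds $\int_0^t\lambda\le 2b_1$, $\int_0^t\tau\lambda\le 2r_0b_1+b_0$, this produces the factor $e^{(n+m-1)(2r_0b_1+b_0)}$ in the Jacobian and hence $\bigl((1+b_0)e^{-(2r_0b_1+b_0)}\bigr)^{(n+m-1)/n}$ in the final inequality.
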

In particular, we can also obtain an isoperimetric type inequality by putting $f\equiv 1$ and $H\equiv 0$ in Theorem \ref{thm2}.
\begin{cor}\label{cor:iso_asym}
    Assuming same conditions as in Theorem \ref{thm2}, we have
    \begin{equation}
        \begin{aligned}
        |\partial\Sigma|\ge n|\Sigma|^{\frac{n-1}{n}}\left[ \left(\frac{(n+m)|B^{n+m}|}{m|B^m|}\right)^{\frac{1}{n}}\theta_h^{\frac{1}{n}} \left(\frac{1+b_0}{e^{2r_0b_1+b_0}}\right)^{\frac{n+m-1}{n}} -2 b_1 |\Sigma|^{\frac{1}{n}}\right].
        \end{aligned}
    \end{equation}
\end{cor}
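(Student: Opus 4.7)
The plan is to deduce Corollary \ref{cor:iso_asym} as an immediate specialization of Theorem \ref{thm2}. Since the hypotheses of the corollary (complete noncompact manifold of asymptotically nonnegative $k$-Ricci curvature, compact minimal submanifold $\Sigma$ possibly with boundary, $m\ge 2$) already contain those of Theorem \ref{thm2}, the inequality of Theorem \ref{thm2} applies to every positive smooth $f$ on $\Sigma$. I would simply take the constant choice $f\equiv 1$, which is admissible.

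With this choice, every term in Theorem \ref{thm2} simplifies. The gradient $D^\Sigma f$ vanishes; combined with the minimality hypothesis $H\equiv 0$, this annihilates the integrand $\sqrt{|D^\Sigma f|^2+f^2|H|^2}$ identically, so the first integral drops out. The remaining integrals reduce to $\int_\Sigma f=|\Sigma|$, $\int_{\partial\Sigma}f=|\partial\Sigma|$, and $\int_\Sigma f^{n/(n-1)}=|\Sigma|$. Substituting these into the inequality of Theorem \ref{thm2} yields
$$2nb_1|\Sigma|+|\partial\Sigma| \;\ge\; n\left(\frac{(n+m)|B^{n+m}|}{m|B^m|}\right)^{1/n}\theta_h^{1/n}\left(\frac{1+b_0}{e^{2r_0b_1+b_0}}\right)^{(n+m-1)/n}|\Sigma|^{(n-1)/n}.$$

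The last step is purely algebraic: transpose $2nb_1|\Sigma|$ to the right-hand side and factor out $n|\Sigma|^{(n-1)/n}$, using the identity $2nb_1|\Sigma|=n|\Sigma|^{(n-1)/n}\cdot 2b_1|\Sigma|^{1/n}$. This produces exactly the inequality stated in the corollary. There is no genuine obstacle: all the analytic substance is already encoded in Theorem \ref{thm2}, and the corollary reduces to bookkeeping. The only point requiring a sentence of justification is that the substitutions $f\equiv 1$ and $H\equiv 0$ are simultaneously compatible with the statement of Theorem \ref{thm2}, which is immediate since $f\equiv 1$ is positive and smooth and minimality is precisely $H\equiv 0$.
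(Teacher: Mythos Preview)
Your proposal is correct and matches the paper's approach exactly: the paper states that Corollary~\ref{cor:iso_asym} is obtained ``by putting $f\equiv 1$ and $H\equiv 0$ in Theorem~\ref{thm2}'' with no further argument. Your rearrangement $2nb_1|\Sigma|=n|\Sigma|^{(n-1)/n}\cdot 2b_1|\Sigma|^{1/n}$ is the only bookkeeping step needed, and it is immediate.
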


This paper is organized as follows. Section \ref{sec2} contains some basic concepts. In Section \ref{sec3}, we deal with the $k$-Ricci curvature and prove Theorem \ref{thm1}. Using similar argument, we prove a Michael-Simon type Sobolev inequality (Theorem \ref{thm2}) for manifolds with asymptotic nonnegative $k$-Ricci curvature in Section \ref{sec4}.

\section{Preliminaries}\label{sec2}
\subsection{$k$-Ricci curvature}
Let $M$ be a Riemannian manifold. The \emph{$k$-Ricci curvature} is the average of sectional curvature over a $k$-dimensional subspace of the tangent space. Let $X\in T_p M$ be a unit tangent vector and $V\subset T_p M$ be a $k$-dimensional subspace such that $X\perp V$. The $k$-Ricci curvature of $(X,V)$ is defined by
$$
Ric_k(X,V)=\frac{1}{k}\sum_{i=1}^k \langle R(e_i,X)e_i,X\rangle,
$$
where $R$ is the Riemann curvature tensor and $\{e_i\}$ is an orthonormal basis of $V$.
It is worth noting that $Ric_1$ is just sectional curvature and $Ric_{n-1}$ is just Ricci curvature. We say a manifold $M$ has nonnegative $k$-Ricci curvature (denoted by $Ric_k\ge 0$) if at each point $p\in M$, for any unit tangent vector $X\in T_p M$ and $k$-dimensional subspace $V$ such that $X\perp V$, we have $Ric_k(X,V)\ge 0$. 
\begin{rem} 
There is another definition for the intermediate Ricci curvature, say $\widetilde{Ric}_k(X,V)$, without the restriction $X\bot V$ (e.g. \cite{KM18, wang22}). We remark that $\widetilde{Ric}_k \ge 0$ is equivalent to $Ric_{k-1} \ge 0$ for each $k\ge 2$.
\end{rem}
\subsection{Asymptotic $k$-Ricci curvature}
The notion of asymptotically nonnegative curvature was first introduced by Abresch \cite{Abresch85}. Let $\lambda:[0,+\infty)\to[0,+\infty)$ be a nonnegative nonincreasing continuous function satisfying
\begin{equation}\label{defb0}
b_0:=\int_0^{+\infty}s\lambda(s)\textrm{d}s<+\infty,
\end{equation}
and
\begin{equation}\label{defb1}
b_1:=\int_0^{+\infty}\lambda(s)\textrm{d}s<+\infty.
\end{equation}
A complete noncompact $n$-dimensional Riemannian manifold $(M,g)$ is said to have asymptotically nonnegative Ricci curvature (sectional curvature, respectively) if there is a base point $o\in M$ such that
$$
Ric(\cdot,\cdot)\ge-(n-1)\lambda(d(o,q))g\quad (Sec\ge-\lambda(d(o,q)), \quad \textrm{respectively}),
$$
at each point $q\in M$. Similarly, we can define the concept of asymptotically nonnegative $k$-Ricci curvature in the sense that there exists a base point $o\in M$ such that 
$$
Ric_k\ge -k\lambda(d(o,q))
$$
at each point $q\in M$. By definition, a manifold whose (Ricci, sectional or $k$-Ricci, respectively) curvature is either nonnegative outside a compact domain or asymptotically flat has asymptotically nonnegative (Ricci, sectional or $k$-Ricci, respectively) curvature.

\subsection{Asymptotic volume ratio}
Let $M$ be a complete noncompact $n$-dimensional Riemannian manifold. The \emph{asymptotic volume ratio} $\theta$ can be regarded as the ratio of the volume of geodesic ball in $M$ to the volume of Euclidean ball in $\mathbb{R}^n$ with same, arbitrary large radius. Precisely, the asymptotic volume ratio $\theta$ is defined as
$$
\theta := \lim_{r\to\infty}\frac{|\{p\in M: d(p,q)<r\}|}{|B^n|r^n},
$$
where $q$ is an arbitrary fixed point in $M$ and $B^n$ is the unit ball in $\mathbb{R}^{n+1}$.
If $M$ has nonnegative Ricci curvature, the Bishop-Gromov volume comparison theorem indicates that $\theta$ exists and $\theta\le 1$.

Similarly, let $h(t)$ be the unique solution of
\begin{equation}
    \left\{\begin{array}{ll}
         h''(t)=\lambda(t)h(t),\\
         h(0)=0,\quad h'(0)=1,
    \end{array}
    \right.
\end{equation}
where $\lambda$ is the nonnegative function given in Section 2.2. The \emph{asymptotic volume ratio of $M$ with respect to $h$} is defined by
\begin{equation}
    \theta_h := \lim_{r\to+\infty}\frac{|\{q\in M:d(o,q)<r\}|}{n|B^n|\int_0^r h^{n-1}(t)\textrm{d}t}.
\end{equation}

\section{Manifolds with nonnegative $k$-Ricci curvature}\label{sec3}
In this section, we assume that $(M,g)$ is a complete noncompact $(n+m)$-dimensional Riemannian manifold with nonnegative $k$-Ricci curvature, where $k=\min(n-1,m-1)$. We also assume that $\Sigma$ is a compact $n$-dimensional submanifold of $M$ (possibly with boundary $\partial\Sigma$), and $f$ is a positive smooth function on $\Sigma$. Let $\bar D$ and $D^\Sigma$ denote the Levi-Civita connection on $(M,g)$ and the induced connection on $\Sigma$, respectively. Let $\bar R$ denote the Riemann curvature tensor on $(M,g)$. For any tangent vector fields $X$, $Y$ on $\Sigma$ and normal vector field $\eta$ along $\Sigma$, the second fundamental form $II$ of $\Sigma$ is given by 
$$
\langle  II(X,Y),\eta\rangle =\langle \bar D_X Y, \eta\rangle.
$$

We only need to prove Theorem \ref{thm1} in the case that $\Sigma$ is connected. By scaling, we assume that
\begin{equation}\label{scaling}
\int_\Sigma \sqrt{|D^\Sigma f|^2+f^2 |H|^2}+\int_{\partial\Sigma}f = n\int_\Sigma f^{\frac{n}{n-1}}.
\end{equation}

Since $\Sigma$ is connected, there exists a solution $u$ to the following Neumann boundary problem.
\begin{equation}
\left\{\begin{array}{lll}
    \mathrm{div}_\Sigma(fD^\Sigma u) = n f^{\frac{n}{n-1}}-\sqrt{|D^\Sigma f|^2+f^2 |H|^2}, &\textrm{in}\, \Sigma,\\
    \\
    \langle D^\Sigma u, \nu \rangle =1, &\textrm{on}\, \partial\Sigma,
\end{array}\right.
\end{equation}
where $\nu$ is the outward conormal to $\partial\Sigma$. By standard elliptic regularity theory, $u\in C^{2,\gamma}(\Sigma)$ for each $0<\gamma<1$ (cf. Theorem 6.30 in \cite{GTbook}).

As in \cite{Brendle22}, we define
\begin{equation*}
\begin{aligned}
    \Omega :=& \{x\in\Sigma\backslash\partial\Sigma : |D^\Sigma u(x)|<1\},\\
    U:=&\{(x,y):x\in\Sigma\backslash\partial\Sigma, y\in T_x^{\perp}\Sigma,|D^\Sigma u(x)|^2+|y|^2 < 1\}.
\end{aligned}
\end{equation*}
For each $r>0$, we denote by $A_r$ the contact set, that is the set of all points $(\bar x,\bar y)\in U$ with the property that
$$
r u(x)+\frac{1}{2} d(x,\exp_{\bar x}(rD^\Sigma u(\bar x)+r \bar y))^2 \ge r u (\bar x)+\frac{1}{2} r^2(|D^\Sigma u(x)|^2+|y|^2),
$$
for all $x$ in $\Sigma$. Moreover, for each $r>0$ we define the transport map $\Phi_r: T^\perp\Sigma\to M$ by
$$
\Phi_r(x,y)=\exp_x(rD^\Sigma u(x)+r y)
$$
for all $x\in\Sigma$ and $y\in T^\perp_x \Sigma$. For each $0<\gamma<1$, since $u$ is of class $C^{2,\gamma}$, $\Phi_r$ is of class $C^{1,\gamma}$.

The following four lemmas are due to Brendle \cite{Brendle22}. Their proofs are independent of the curvature condition, so they also hold in our setting and we omit the proofs here.
\begin{lem}[Lemma 4.1 in \cite{Brendle22}]\label{lemma1}
Assume that $x\in\Omega$ and $y\in T_x^\perp\Sigma$ satisfy $|D^\Sigma u(x)|^2+|y|^2\le 1$. Then $\Delta_\Sigma u(x)-\langle H(x),y\rangle \le n f(x)^{\frac{1}{n-1}}$.
\end{lem}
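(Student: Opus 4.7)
The plan is to unpack the PDE that $u$ solves and reduce the claim to a single Cauchy–Schwarz step performed in the orthogonal direct sum $T_x\Sigma \oplus T_x^{\perp}\Sigma$. Note that this lemma makes no use of the curvature hypothesis, only of the boundary value problem defining $u$ and the positivity of $f$, which is why the argument from Brendle carries over verbatim.

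First I would expand the divergence in the PDE at the point $x$ to write
\begin{equation*}
f(x)\,\Delta_\Sigma u(x) + \langle D^\Sigma f(x), D^\Sigma u(x)\rangle \;=\; n\,f(x)^{\frac{n}{n-1}} - \sqrt{|D^\Sigma f(x)|^2 + f(x)^2 |H(x)|^2}.
\end{equation*}
Dividing by $f(x)>0$, the inequality to be proved becomes, after rearranging,
\begin{equation*}
\langle D^\Sigma f(x), D^\Sigma u(x)\rangle + \langle f(x) H(x), y\rangle \;\ge\; -\sqrt{|D^\Sigma f(x)|^2 + f(x)^2 |H(x)|^2}.
\end{equation*}

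The second step is to view the pair $(D^\Sigma f(x),\, f(x)H(x))$ as a vector in $T_x\Sigma \oplus T_x^{\perp}\Sigma$ with the direct sum inner product, and likewise $(D^\Sigma u(x),\, y)$, and apply Cauchy–Schwarz:
\begin{equation*}
\langle D^\Sigma f, D^\Sigma u\rangle + \langle fH, y\rangle \;\ge\; -\sqrt{|D^\Sigma f|^2+f^2|H|^2}\,\sqrt{|D^\Sigma u|^2+|y|^2}.
\end{equation*}
The hypothesis $|D^\Sigma u(x)|^2 + |y|^2 \le 1$ bounds the second factor on the right by $1$, which delivers the required inequality.

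There is essentially no obstacle: the only thing to be slightly careful about is the sign and direction of the Cauchy–Schwarz inequality (so that the lower bound matches the $-\sqrt{\,\cdot\,}$ term coming from the right-hand side of the PDE), and to observe that the orthogonal splitting of the tangent bundle of $M$ along $\Sigma$ is exactly what allows the two inner products (tangential and normal) to be combined into a single Cauchy–Schwarz. No regularity issues arise since $u\in C^{2,\gamma}$ and $f$ is smooth.
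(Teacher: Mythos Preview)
Your proof is correct and is exactly the argument from Brendle's paper, which the present paper cites but does not reproduce; the expansion of $\mathrm{div}_\Sigma(fD^\Sigma u)$ followed by a single Cauchy--Schwarz step on $(D^\Sigma f,\,fH)$ and $(D^\Sigma u,\,y)$ in $T_x\Sigma\oplus T_x^\perp\Sigma$, together with the hypothesis $|D^\Sigma u|^2+|y|^2\le1$, is precisely how the lemma is established.
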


\begin{lem}[Lemma 4.2 in \cite{Brendle22}]\label{lemma2}
For each $0\le\sigma<1$, the set 
$$
\{p\in M:\sigma r<d(x,p)<r \textrm{ for all } x\in\Sigma\}
$$
is contained in the set
$$
\Phi_r(\{(x,y)\in A_r : |D^\Sigma u(x)|^2+|y|^2 >\sigma^2\}).
$$
\end{lem}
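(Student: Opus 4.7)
The plan is to run a standard ABP-type contact-point argument. Given $p \in M$ with $\sigma r < d(x,p) < r$ for every $x \in \Sigma$, I would introduce the auxiliary function
$$\phi(x) := r\, u(x) + \tfrac{1}{2}\, d(x,p)^2$$
on $\Sigma$, which by compactness attains its minimum at some $\bar x \in \Sigma$. The normal component $\bar y$ required by the lemma will be read off from the first-variation data of $\phi$ at $\bar x$, and the definitions of $A_r$ and $\Phi_r$ should then yield the conclusion directly.

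The first step is to rule out $\bar x \in \partial \Sigma$. If $\bar x$ were a boundary minimizer, one-sided minimality would force $\langle D^\Sigma \phi(\bar x), \nu\rangle \le 0$. Using the identity $D^\Sigma\bigl(\tfrac{1}{2} d(\cdot,p)^2\bigr)(\bar x) = -\bigl(\exp_{\bar x}^{-1}(p)\bigr)^T$ together with the Neumann condition $\langle D^\Sigma u,\nu\rangle = 1$, this reduces to $r \le \langle \exp_{\bar x}^{-1}(p),\nu\rangle$; but $|\exp_{\bar x}^{-1}(p)| = d(\bar x,p) < r$ by hypothesis, so this is impossible and $\bar x \in \Sigma\setminus\partial\Sigma$.

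Next, at an interior minimizer the full tangential gradient of $\phi$ vanishes, giving $r\, D^\Sigma u(\bar x) = \bigl(\exp_{\bar x}^{-1}(p)\bigr)^T$. I would then set
$$\bar y := \tfrac{1}{r}\,\bigl(\exp_{\bar x}^{-1}(p)\bigr)^\perp \in T_{\bar x}^\perp \Sigma,$$
so that $\exp_{\bar x}^{-1}(p) = r\, D^\Sigma u(\bar x) + r \bar y$ and hence $\Phi_r(\bar x,\bar y) = p$. Taking norms yields $r^2\bigl(|D^\Sigma u(\bar x)|^2 + |\bar y|^2\bigr) = d(\bar x,p)^2$, and the hypothesis $\sigma r < d(\bar x,p) < r$ then gives $\sigma^2 < |D^\Sigma u(\bar x)|^2 + |\bar y|^2 < 1$, so $(\bar x,\bar y) \in U$. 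Finally, the global inequality $\phi(x) \ge \phi(\bar x)$, after substituting $d(\bar x,p)^2 = r^2(|D^\Sigma u(\bar x)|^2+|\bar y|^2)$ on the right-hand side, is literally the defining inequality of $A_r$, so $(\bar x,\bar y) \in A_r$.

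The main technical subtlety will be the possibility that $\bar x$ lies in the cut locus of $p$, where $\exp_{\bar x}^{-1}(p)$ is not literally defined. I would handle this in the standard way by replacing $\exp_{\bar x}^{-1}(p)$ throughout by $d(\bar x,p)\,\dot\gamma(0)$, where $\gamma$ is any unit-speed minimizing geodesic from $\bar x$ to $p$, and using the one-sided first-variation inequality $\tfrac{1}{2}\, d(\exp_{\bar x}(tw),p)^2 \le \tfrac{1}{2}\, d(\bar x,p)^2 - t\,\langle w,\, d(\bar x,p)\,\dot\gamma(0)\rangle + O(t^2)$, which remains valid at cut points. Both the boundary exclusion and the identification of $\bar y$ go through with this modification, and no curvature hypothesis is used anywhere in the argument, which is consistent with the lemma being applicable unchanged in the $k$-Ricci setting of this paper.
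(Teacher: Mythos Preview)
Your proposal is correct and reproduces precisely the argument of Brendle \cite{Brendle22}, which is what the present paper defers to (the authors explicitly omit the proof, noting only that it is curvature-independent). Your handling of the boundary exclusion via the Neumann condition, the identification of $\bar y$ from the tangential first-order condition, and the cut-locus subtlety via the one-sided first-variation inequality are all exactly as in Brendle's original; nothing further is needed.
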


\begin{lem}[Lemma 4.3 in \cite{Brendle22}]\label{lemma3}
Assume that $(\bar x, \bar y)\in A_r$, and let $\bar\gamma(t):=\exp_{\bar x}(tD^\Sigma u (\bar x)+t \bar y)$ for all $t\in[0,r]$. If $Z$ is a smooth vector field along $\bar\gamma$ satisfying $Z(0)\in T_{\bar x}\Sigma$ and $Z(r)=0$, then
\begin{equation*}
    \begin{aligned}
        &(\nabla^2_\Sigma u)(Z(0),Z(0))-\langle  II(Z(0),Z(0)),\bar y\rangle\\
        &+\int_0^r(|\bar D_t Z(t)|^2-\bar R(\bar\gamma'(t),Z(t),\bar\gamma'(t),Z(t)))\textrm{d}t\ge0.
    \end{aligned}
\end{equation*}
\end{lem}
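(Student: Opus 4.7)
The plan is to extract the inequality from the fact that the contact condition $(\bar x, \bar y) \in A_r$ makes the function
\[
F(x) := r\, u(x) + \tfrac{1}{2}\, d(x, p)^2, \qquad p := \exp_{\bar x}\bigl(r\, D^\Sigma u(\bar x) + r\,\bar y\bigr),
\]
attain its minimum over $\Sigma$ at $x = \bar x$. Consequently, for any smooth curve $\beta \colon (-\varepsilon, \varepsilon) \to \Sigma$ with $\beta(0) = \bar x$ and $\beta'(0) = Z(0)$, I have $\frac{d^2}{ds^2} F(\beta(s))\big|_{s=0} \ge 0$. I would choose $\beta$ to be a geodesic of $\Sigma$, so that $D^\Sigma_s \beta'(0) = 0$ and the $u$-contribution collapses to $r\,\nabla^2_\Sigma u(Z(0), Z(0))$; the residual ambient acceleration is then $\bar D_s\beta'(0) = II(Z(0), Z(0))$, which is normal to $\Sigma$.

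To control the distance-squared piece $\phi := \tfrac{1}{2} d(\cdot, p)^2$, I would construct a smooth variation $\alpha(s, t)$ of $\bar\gamma$ with $\alpha(0, t) = \bar\gamma(t)$, $\alpha(s, 0) = \beta(s)$, $\alpha(s, r) \equiv p$, and $\partial_s \alpha(0, t) = Z(t)$; this is possible because $Z(0) \in T_{\bar x}\Sigma$ matches $\beta'(0)$ and $Z(r) = 0$ matches the fixed endpoint $p$. The triangle-inequality and Cauchy--Schwarz chain
\[
d(\beta(s), p)^2 \le \Bigl(\int_0^r |\partial_t \alpha|\,dt\Bigr)^2 \le r\int_0^r |\partial_t \alpha|^2\,dt = r\, E(s)
\]
holds with equality at $s = 0$, since $\bar\gamma$ realises the distance and has constant speed. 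Hence $\phi(\beta(s)) - \tfrac{r}{2} E(s)$ attains a maximum at $s = 0$, giving $\frac{d^2}{ds^2}\phi(\beta(s))\big|_{s=0} \le \tfrac{r}{2} E''(0)$.

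Applying the standard second variation of energy and integrating by parts (using that $\bar\gamma$ is a geodesic of $M$), I get
\[
\tfrac{1}{2} E''(0) = \bigl[\langle \bar D_s \partial_s \alpha, \bar\gamma'\rangle\bigr]_0^r + \int_0^r \bigl(|\bar D_t Z|^2 - \bar R(\bar\gamma', Z, \bar\gamma', Z)\bigr)\,dt.
\]
The boundary at $t = r$ vanishes because $\alpha(s, r) \equiv p$ is constant in $s$, while at $t = 0$ the ambient acceleration $\bar D_s \partial_s\alpha|_{(0,0)} = II(Z(0), Z(0))$ is normal, so pairing it with $\bar\gamma'(0) = D^\Sigma u(\bar x) + \bar y$ kills the tangential piece and leaves $-\langle II(Z(0), Z(0)), \bar y\rangle$. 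Combining the three ingredients and dividing by $r$ yields the stated inequality.

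The main obstacle is making sure the two corrections combine cleanly into the single term $-\langle II(Z(0), Z(0)), \bar y\rangle$: the second fundamental form contribution arises from the necessity of lifting $\beta$ off $\Sigma$ into $M$ at the boundary $t = 0$ of the variation, while the curvature integrand comes from commuting $\bar D_s$ with $\bar D_t$ on a non-flat background. The engineered choice of $\beta$ as a $\Sigma$-geodesic, together with the hypotheses $Z(0) \in T_{\bar x}\Sigma$ and $Z(r) = 0$, is precisely what forces every other endpoint contribution to disappear and the curvature term to appear with the indicated sign.
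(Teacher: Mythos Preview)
The paper does not supply its own proof of this lemma: it cites Brendle \cite{Brendle22} and explicitly omits the argument, noting only that the proof is independent of the curvature assumption. Your proposal is correct and is essentially Brendle's original argument: interpret the contact condition as a minimum of $F(x)=ru(x)+\tfrac12 d(x,p)^2$ on $\Sigma$, replace the possibly non-smooth distance term by the smooth energy of a variation matching $Z$, and read off the inequality from the second variation of energy together with the boundary identity $\bar D_s\beta'(0)=II(Z(0),Z(0))$ for a $\Sigma$-geodesic $\beta$.

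One cosmetic remark: rather than bounding $\phi''$ by $\tfrac{r}{2}E''(0)$ via the auxiliary maximum of $\phi(\beta(s))-\tfrac{r}{2}E(s)$, it is slightly cleaner to combine the two inequalities first and observe that the \emph{smooth} function $G(s):=ru(\beta(s))+\tfrac{r}{2}E(s)$ satisfies $G(s)\ge G(0)$ directly, so $G''(0)\ge 0$; this sidesteps any worry about differentiating $d(\cdot,p)^2$ near a potential cut point. Either way the conclusion is the same.
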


\begin{lem}[Lemma 4.5 in \cite{Brendle22}]\label{lemma4}
Assume that $(\bar x, \bar y)\in A_r$, and let $\bar\gamma(t):=\exp_{\bar x}(tD^\Sigma u (\bar x)+t \bar y)$ for all $t\in[0,r]$. Moreover, let $\{e_1,\dots,e_n\}$ be an orthonormal basis of $T_{\bar x}\Sigma$. Suppose that $W$ is a Jacobi field along $\bar{\gamma}$ satisfying $W(0)\in T_{\bar x}\Sigma$ and $\langle \bar D_t W(0), e_j\rangle=(D^2_\Sigma u)(W(0),e_j)-\langle  II(W(0),e_j),\bar y\rangle$ for each $1\le j\le n$. IF $W(\tau)=0$ for some $\tau\in(0,r)$, then $W$ vanishes identically.
\end{lem}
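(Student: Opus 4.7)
The plan is to test the variational inequality of Lemma \ref{lemma3} against a piecewise-smooth competitor constructed from $W$ itself, show that the inequality becomes an equality, and then deduce via a first-variation argument that the kink in the derivative of the competitor must actually vanish, at which point $W$ is trivial by uniqueness of Jacobi fields. Concretely, I would set
\[
Z(t) := \begin{cases} W(t), & t \in [0,\tau],\\ 0, & t \in [\tau,r]. \end{cases}
\]
Since $W(\tau) = 0$ by hypothesis, $Z$ is continuous, $Z(0) = W(0) \in T_{\bar x}\Sigma$, and $Z(r) = 0$, so Lemma \ref{lemma3} applies and the associated quadratic functional $Q(Z)$ is nonnegative.

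The key computation is to evaluate $Q(Z)$ and obtain $Q(Z) = 0$. On $[\tau, r]$, $Z \equiv 0$, so the integrand vanishes. On $[0, \tau]$, the Jacobi equation $\bar D_t \bar D_t W + \bar R(\bar\gamma', W)\bar\gamma' = 0$ together with integration by parts and $W(\tau) = 0$ give
\[
\int_0^\tau \bigl(|\bar D_t W|^2 - \bar R(\bar\gamma', W, \bar\gamma', W)\bigr)\, dt = -\langle \bar D_t W(0), W(0)\rangle.
\]
Expanding $W(0) = \sum_j \langle W(0), e_j\rangle\, e_j$ and inserting the prescribed initial condition on $\bar D_t W(0)$, the right-hand side becomes exactly $-(\nabla^2_\Sigma u)(W(0), W(0)) + \langle II(W(0), W(0)), \bar y\rangle$, which cancels the boundary contribution of $Q(Z)$ at $t = 0$. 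Hence $Q(Z) = 0$ and $Z$ minimizes $Q$ among admissible vector fields.

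For the final step I would take a smooth variation $\phi$ along $\bar\gamma$ with $\phi(0) = 0$ and $\phi(r) = 0$. Then $Z + \epsilon\phi$ is admissible and $Q(Z + \epsilon\phi) \ge 0 = Q(Z)$, so the linear term in $\epsilon$ must vanish. Integrating by parts on $[0,\tau]$ (where $Z = W$ is Jacobi) and using that the integrand is identically zero on $[\tau, r]$, the bulk Jacobi-curvature terms cancel pairwise and the boundary contributions at $t = 0$ and $t = r$ drop out, leaving exactly $\langle \bar D_t W(\tau), \phi(\tau)\rangle = 0$. Since $\phi(\tau) \in T_{\bar\gamma(\tau)}M$ can be chosen arbitrarily, this forces $\bar D_t W(\tau) = 0$, and together with $W(\tau) = 0$ the uniqueness theorem for Jacobi fields yields $W \equiv 0$. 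The main delicate point is the boundary-term bookkeeping at the kink $t = \tau$: the piecewise structure of $Z$ contributes exactly one surviving boundary term from the $[0,\tau]$ side, and this term is precisely the obstruction $\bar D_t W(\tau)$ to $Z$ being a smooth critical point of $Q$.
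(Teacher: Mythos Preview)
Your argument is correct and is essentially the standard proof of this lemma (the one Brendle gives in \cite{Brendle22}, to which the present paper simply defers without reproducing the argument). The only point worth flagging is that Lemma~\ref{lemma3} is stated for \emph{smooth} vector fields $Z$, whereas your competitor is merely piecewise smooth with a kink at $t=\tau$; you should remark that the inequality of Lemma~\ref{lemma3} persists for piecewise smooth fields by a routine approximation, or equivalently that the index form is continuous under $H^1$-convergence. With that caveat, the integration-by-parts bookkeeping and the first-variation step are handled correctly, and the conclusion $\bar D_t W(\tau)=0$, hence $W\equiv 0$, follows as you state.
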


Now with the preparation above, we are in a position to prove Theorem \ref{thm1}.

\begin{proof}[Proof of Theorem \ref{thm1}]
Throughout the proof we use the following notions of indices
$$
1\le i,j\le n,\qquad n+1\le\alpha,\beta\le n+m,\qquad 1\le A,B \le n+m.
$$
For any $r>0$ and $(\bar x, \bar y)\in A_r$, let $\{e_i\}_{1\le i\le n}$ be any given orthonormal basis in $T_{\bar x}\Sigma$. We can choose geodesic normal coordinates $(x_1,\dots,x_n)$ on $\Sigma$ around $\bar x$ such that $\frac{\partial}{\partial x_i}=e_i$ at $\bar x$ for each $1\le i\le n$. 
Let $\{e_\alpha\}_{n+1\le \alpha\le n+m}$ be a local orthonormal frame of $T^\perp \Sigma$ around $\bar x$ such that $\langle \bar D_{e_i}e_\alpha, e_\beta\rangle=0$ at $\bar x$. 
Now a normal vector $y$ can be written as $y=\sum_\alpha y_\alpha e_\alpha$ and in this sense $(x_1,\dots,x_n,y_{n+1},\dots,y_{n+m})$ forms a local coordinate system on the total space of the normal bundle $T^\perp\Sigma$.

Let $\bar\gamma(t):=\exp_{\bar x}(tD^\Sigma u (\bar x)+t \bar y)$ for all $t\in[0,r]$. For each $1\le A\le n+m$, let $E_A(t)$ be the parallel transport of $e_A$ along $\bar \gamma$. 
For each $1\le i \le n$, let $X_i(t)$ be the unique Jacobi field along $\bar \gamma$ satisfying 
\begin{equation*}
\begin{aligned}
X_i(0)&=e_i,\\
\langle \bar D_t X_i(0),e_j\rangle&=(D^2_\Sigma u)(e_i,e_j)-\langle II(e_i,e_j),\bar y\rangle,\\
\langle \bar D_t X_i(0),e_\beta\rangle&=\langle II(e_i,D^\Sigma u),e_\beta\rangle.
\end{aligned}
\end{equation*}
For each $n+1\le \alpha\le n+m$, let $X_\alpha(t)$ be the unique Jacobi field along $\bar \gamma$ satisfying
\begin{equation*}
X_\alpha(0)=0,\qquad \bar D_t X_\alpha(0)=e_\alpha.
\end{equation*}
Lemma \ref{lemma4} tells us that $\{X_A(t)\}_{1\le A\le n+m}$ are linearly independent for each $t\in(0,r)$.

Then we define two $(n+m)\times(n+m)$-matrix $P(t)=(P_{AB}(t))$ and $S(t)=(S_{AB}(t))$ by
\begin{equation*}
    \begin{aligned}
        P_{AB}(t)&=\langle X_A(t),E_B(t)\rangle,\\
        S_{AB}(t)&=\bar R(\bar \gamma'(t),E_A(t),\bar \gamma'(t),E_B(t)).
    \end{aligned}
\end{equation*}
Based on the observation that 
\begin{equation}
    \frac{\partial\Phi_t}{\partial x_A}(\bar x, \bar y)=X_A(t)
\end{equation}
for each $1\le A\le n+m$, we conclude that 
$$
|\det \bar D\Phi_t(\bar x, \bar y)|=\det P(t)
$$
for all $t\in(0,r)$. Therefore, we only need to estimate $\det P(t)$.

By the definition of the Jacobi fields $X_A(t)$, the Jacobi equation reads
$$
P''(t)=-P(t)S(t)
$$
with the initial conditions
\begin{equation*}
P(0)=\left[
\begin{array}{cc}
    \delta_{ij} & 0  \\
    0 & 0
\end{array}
\right]
\end{equation*}
and
\begin{equation*}
P'(0)=\left[
\begin{array}{cc}
    (D^2_\Sigma u)(e_i,e_j)-\langle II(e_i,e_j),\bar y\rangle & \langle II(e_i,D^\Sigma u),e_\beta\rangle  \\
    0 & \delta_{\alpha\beta}
\end{array}
\right].
\end{equation*}

Moreover, since
$$
\frac{d}{dt}(P'(t)P(t)^T)=-P(t)S(t)P(t)^T+P'(t)P'(t)^T
$$
is symmetric for each $t\in(0,r)$, $P'(t)P(t)^T$ is also symmetric for each $t$. Let $Q(t)$ be a matrix defined by
$$
Q(t):=P(t)^{-1}P'(t)=P(t)^{-1}P'(t)P(t)^T(P(t)^{-1})^T,
$$
which is symmetric for each $t\in (0,r)$. 
Then the Riccati equation reads
\begin{equation}\label{Riccati1}
Q'(t)=-S(t)-Q^2(t).
\end{equation}

From the asymptotic expansion of $P(t)$
\begin{equation*}
P(t)=\left[
\begin{array}{cc}
    \delta_{ij}+O(t) & O(t)  \\
    O(t) & t\delta_{\alpha\beta}+O(t^2)
\end{array}
\right],
\end{equation*}
a direct computation gives rise to
\begin{equation}\label{initialQ}
Q(t)=\left[
\begin{array}{cc}
    (D^2_\Sigma u)(e_i,e_j)-\langle II(e_i,e_j),\bar y\rangle +O(t) & O(1)  \\
    O(1) & t^{-1}\delta_{\alpha\beta}+O(1)
\end{array}
\right]
\end{equation}
as $t\to 0$.

By taking partial trace of $Q(t)$, we reduce the Riccati equation \eqref{Riccati1} to the following two equations
\begin{equation}\label{Riccati2}
\begin{aligned}
\sum_i Q'_{ii}(t)+\frac{1}{n}(\sum_i Q_{ii}(t))^2 &\le \sum_i Q'_{ii}(t)+\sum_i(Q^2)_{ii}(t)=-\sum_i S_{ii}(t),\\
\sum_\alpha Q'_{\alpha\alpha}(t)+\frac{1}{m}(\sum_\alpha Q_{\alpha\alpha}(t))^2 &\le \sum_\alpha Q'_{\alpha\alpha}(t)+\sum_\alpha(Q^2)_{\alpha\alpha}(t)=-\sum_\alpha S_{\alpha\alpha}(t),
\end{aligned}
\end{equation}
where we use the Cauchy-Schwarz inequality.

By assumption, $M$ has nonnegative $k$-Ricci curvature. We claim that $\sum_i S_{ii}(t)$ and $\sum_\alpha S_{\alpha \alpha}(t)$ are nonnegative for any $t\in[0,r)$.

In fact, for fixed $t\in[0,r)$, without loss of generality we can choose $e_1=\frac{D^{\Sigma}u(\bar x)}{|D^{\Sigma}u(\bar x)|}$ and $e_{n+1}=\frac{\bar y}{|\bar y|}$. Denote $a:=\sqrt{|D^{\Sigma}u(\bar x)|^2+|\bar y|^2}$, then there exists an angle $s$ and a vector field $\xi$ along $\bar{\gamma}$, such that
\begin{equation}
                                  \left(\begin{array}{cc}
                                  \frac{1}{a}\bar{\gamma}'(t)   \\
                                  \frac{1}{a}\xi(t)  \\
                                  \end{array}\right)=
                                  \left( \begin{array}{cc}
                                  \cos s& \sin s  \\
                                  -\sin s & \cos s \\
                                  \end{array} \right)
                                  \left(\begin{array}{cc}
                                  E_1(t)  \\
                                  E_{n+1}(t) \\
                                  \end{array}\right),
\end{equation}
as well as
\begin{equation}
                                  \left(\begin{array}{cc}
                                  E_1(t)  \\
                                  E_{n+1}(t) \\
                                  \end{array}\right)=
                                  \left( \begin{array}{cc}
                                  \cos s& -\sin s  \\
                                  \sin s & \cos s \\
                                  \end{array} \right)
                                  \left(\begin{array}{cc}
                                 \frac{1}{a}\bar{\gamma}'(t)   \\
                                  \frac{1}{a}\xi(t)  \\
                                  \end{array}\right).
\end{equation}
Now
\begin{equation}
\begin{aligned}
S_{11}(t)&=\bar R(\bar \gamma'(t),E_1(t),\bar \gamma'(t),E_1(t))\\
&=\bar R(\bar \gamma'(t),\cos s \cdot \frac{1}{a}\bar{\gamma}'(t) - \sin s \cdot \frac{1}{a}\xi(t) ,\bar \gamma'(t), \cos s \cdot \frac{1}{a} \bar{\gamma}'(t) - \sin s\cdot \frac{1}{a} \xi(t))\\
&=\sin^2 s\bar R(\bar \gamma'(t), \frac{1}{a} \xi(t),\bar \gamma'(t), \frac{1}{a} \xi(t)).
\end{aligned}
\end{equation}
Since $\{\frac{1}{a}\bar \gamma'(t), E_2(t), \dots, E_n(t), \frac{1}{a}\xi(t), E_{n+2}(t), \dots, E_{n+m}(t)\}$ form an orthogonal basis, we can compute that
\begin{equation}\label{sum:sii}
\begin{aligned}
\sum_{i=1}^{n}S_{ii}(t)&=\sum_{i=1}^{n}\bar R(\bar \gamma'(t),E_i(t),\bar \gamma'(t),E_i(t))\\
&=\bar R(\bar \gamma'(t),E_1(t),\bar \gamma'(t),E_1(t))+\sum_{i=2}^{n}\bar R(\bar \gamma'(t),E_i(t),\bar \gamma'(t),E_i(t))\\
&=\sin^2 s\bar R(\bar \gamma'(t), \frac{1}{a} \xi(t),\bar \gamma'(t), \frac{1}{a} \xi(t))+\sum_{i=2}^{n}\bar R(\bar \gamma'(t),E_i(t),\bar \gamma'(t),E_i(t))\\
&= \sin^2 s \left(\bar R(\bar \gamma'(t), \frac{1}{a} \xi(t),\bar \gamma'(t), \frac{1}{a} \xi(t))+\sum_{i=2}^{n}\bar R(\bar \gamma'(t),E_i(t),\bar \gamma'(t),E_i(t))\right)\\
&\quad+\cos^2 s Ric_{n-1}\left(\bar \gamma'(t), \mathrm{span}\{E_2(t), \dots, E_n(t)\}\right)\\
&\ge \sin^2 s \left(\bar R(\bar \gamma'(t), \frac{1}{a} \xi(t),\bar \gamma'(t), \frac{1}{a} \xi(t))+\sum_{i=2}^{n}\bar R(\bar \gamma'(t),E_i(t),\bar \gamma'(t),E_i(t))\right)\\
&=\sin^2 s Ric_{n}\left(\bar \gamma'(t), \mathrm{span}\{\frac{1}{a} \xi(t),E_2(t), \dots, E_n(t)\}\right)\\
&\ge0.
\end{aligned}
\end{equation}
Analogously we also have $\sum_\alpha S_{\alpha \alpha}(t)\ge0$.

Therefore, the equations \eqref{Riccati2} become
\begin{equation}\label{Riccati3}
\begin{aligned}
\sum_i Q'_{ii}(t)+\frac{1}{n}(\sum_i Q_{ii}(t))^2 &\le 0,\\
\sum_\alpha Q'_{\alpha\alpha}(t)+\frac{1}{m}(\sum_\alpha Q_{\alpha\alpha}(t))^2 &\le 0.
\end{aligned}
\end{equation}

A standard ODE comparison gives
\begin{equation}\label{logdetP1}
\begin{aligned}
    \frac{d}{dt}\log \det P(t)&=\Tr Q(t)\\
    &=\sum_i Q_{ii}(t)+\sum_\alpha Q_{\alpha\alpha}(t)\\
    &\le \frac{n(\Delta_\Sigma u(\bar x)-\langle H(\bar x),\bar y\rangle)}{t(\Delta_\Sigma u(\bar x)-\langle H(\bar x),\bar y\rangle)+n}+\frac{m}{t}.
    \end{aligned}
\end{equation}

Integrating \eqref{logdetP1} over $[\epsilon,t]$ for $0<\epsilon<t$ and letting $\epsilon\to 0^+$, we obtain
\begin{equation}\label{detP1}
    \begin{aligned}
        &|\det \bar D\Phi_t(\bar x, \bar y)|=\det P(t)\\
        &\le\left(1+\frac{t}{n}(\Delta_\Sigma u(\bar x)-\langle H(\bar x),\bar y\rangle)\right)^nt^m,
    \end{aligned}
\end{equation}
for all $t\in(0,r)$.

By Lemma \ref{lemma1}, we have
\begin{equation}
    |\det \bar D\Phi_r(\bar x, \bar y)|\le r^m(1+r f(\bar x)^{\frac{1}{n-1}})^n.
\end{equation}

As in the proof of \cite{Brendle22}, together with the above estimate, Lemma \ref{lemma2} tells us
\begin{equation}
\begin{aligned}
    &|\{p\in M:\sigma r<d(x,p)<r \textrm{ for all } x\in\Sigma\}|\\
    &\le \int_\Omega\left(\int_{\{y\in T_x^\perp \Sigma:\sigma^2<|D^\Sigma u(x)|^2+|y|^2<1\}}|\det \bar D\Phi_r(x, y)| 1_{A_r}(x,y) \textrm{d}y\right)\textrm{d} vol(x)\\
    &\le \int_\Omega\left(\int_{\{y\in T_x^\perp \Sigma:\sigma^2<|D^\Sigma u(x)|^2+|y|^2<1\}} r^m (1+r f(x)^{\frac{1}{n-1}})^n\textrm{d}y\right)\textrm{d}vol(x)\\
    &\le\frac{m}{2}|B^m|(1-\sigma^2)\int_\Omega r^m (1+r f(x)^{\frac{1}{n-1}})^n\textrm{d}vol(x),
\end{aligned}
\end{equation}
for all $r>0$ and all $0\le\sigma<1$.
Now dividing both sides by $r^{n+m}$ and letting $r\to\infty$, we have
\begin{equation}
    |B^{n+m}|(1-\sigma^{n+m})\theta\le \frac{m}{2}|B^m|(1-\sigma^2)\int_\Omega f(x)^{\frac{n}{n-1}}\textrm{d}vol(x),
\end{equation}
for all $0\le\sigma<1$. Finally, dividing both sides by $1-\sigma$ and letting $\sigma\to1$, we have
\begin{equation}
    (n+m)|B^{n+m}|\theta\le m|B^m| \int_\Omega f(x)^{\frac{n}{n-1}}\textrm{d}vol(x) \le m|B^m| \int_\Sigma f(x)^{\frac{n}{n-1}}\textrm{d}vol(x)
\end{equation}
Consequently, it is apparent from the scaling assumption \eqref{scaling} that
\begin{equation}
    \begin{aligned}
        &\int_\Sigma \sqrt{|D^\Sigma f|^2+f^2 |H|^2}+\int_{\partial\Sigma}f = n\int_\Sigma f^{\frac{n}{n-1}}\\
        &\ge n \left(\frac{(n+m)|B^{n+m}|}{m|B^m|}\right)^{\frac{1}{n}}\theta^{\frac{1}{n}}\left(\int_\Sigma f^{\frac{n}{n-1}}\right)^{\frac{n-1}{n}}.
    \end{aligned}
\end{equation}

\end{proof}

\section{Manifolds with asymptotically nonnegative $k$-Ricci curvature}\label{sec4}

\begin{proof}[Proof of Theorem \ref{thm2}]
By scaling, we assume that
\begin{equation}\label{scaling2}
\int_\Sigma \sqrt{|D^\Sigma f|^2+f^2 |H|^2}+2n b_1\int_\Sigma f + \int_{\partial\Sigma}f = n\int_\Sigma f^{\frac{n}{n-1}}.
\end{equation}

We use the same notions as in the proof of Theorem \ref{thm1} except that $u$ is a solution of the following Neumann boundary problem
\begin{equation}
\left\{\begin{array}{lll}
    \mathrm{div}_\Sigma(fD^\Sigma u) = n f^{\frac{n}{n-1}}-\sqrt{|D^\Sigma f|^2+f^2 |H|^2}-2n b_1 f, &\textrm{in}\, \Sigma,\\
    \\
    \langle D^\Sigma u, \nu \rangle =1, &\textrm{on}\, \partial\Sigma.
\end{array}\right.
\end{equation}

Since we only change the definition of $u$, Lemma \ref{lemma2}, Lemma \ref{lemma3} and Lemma \ref{lemma4} still hold. We also need another version of Lemma \ref{lemma1}.
\begin{lem}[Lemma 3.1 in \cite{DLL22}]\label{lemma1'}
    Assume that $x\in\Omega$ and $y\in T_x^\perp \Sigma$ satisfy $|D^\Sigma u(x)|^2+|y|^2\le 1$. Then $\Delta_\Sigma u(x)-\langle H(x),y\rangle\le n f(x)^{\frac{1}{n-1}}-2n b_1$.
\end{lem}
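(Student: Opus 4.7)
My plan is to mimic the proof of Lemma \ref{lemma1} (Brendle's Lemma 4.1) line-for-line, tracking the effect of the additional $-2nb_1 f$ term that now appears on the right-hand side of the Neumann problem for $u$. No new curvature input is needed, because the statement is a purely pointwise consequence of the defining PDE together with a single Cauchy--Schwarz inequality in $T_xM = T_x\Sigma \oplus T_x^\perp \Sigma$.

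First, I would expand the divergence in the PDE by the product rule, $\mathrm{div}_\Sigma(fD^\Sigma u) = f\Delta_\Sigma u + \langle D^\Sigma f, D^\Sigma u\rangle$, to obtain the pointwise identity
\[
f(x)\Delta_\Sigma u(x) + \langle D^\Sigma f(x), D^\Sigma u(x)\rangle = n f(x)^{\frac{n}{n-1}} - \sqrt{|D^\Sigma f(x)|^2 + f(x)^2|H(x)|^2} - 2nb_1 f(x).
\]
Second, I would view $(D^\Sigma f, fH)$ and $(D^\Sigma u, y)$ as vectors in $T_x\Sigma \oplus T_x^\perp \Sigma = T_xM$, whose norms are $\sqrt{|D^\Sigma f|^2 + f^2|H|^2}$ and $\sqrt{|D^\Sigma u|^2 + |y|^2} \le 1$ respectively. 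Cauchy--Schwarz together with the constraint on $(x,y)$ then gives
\[
-\langle D^\Sigma f, D^\Sigma u\rangle - f\langle H, y\rangle \le \sqrt{|D^\Sigma f|^2 + f^2|H|^2}.
\]

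Adding this inequality to the previous identity (minus $f\langle H, y\rangle$ on both sides), the $\sqrt{|D^\Sigma f|^2 + f^2|H|^2}$ contributions cancel and I am left with
\[
f\bigl(\Delta_\Sigma u - \langle H, y\rangle\bigr) \le n f^{\frac{n}{n-1}} - 2nb_1 f.
\]
Dividing through by the positive function $f$ yields the claim. I do not anticipate any genuine obstacle: the new term $-2nb_1 f$ is linear in $f$, so division by $f$ converts it into precisely the $-2nb_1$ appearing on the right-hand side of the conclusion. The only care required is bookkeeping of signs in the Cauchy--Schwarz step, which is routine since the ball $|D^\Sigma u|^2 + |y|^2 \le 1$ is symmetric in $y$.
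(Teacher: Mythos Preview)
Your proof is correct and is exactly the natural adaptation of Brendle's Lemma 4.1: expand $\mathrm{div}_\Sigma(fD^\Sigma u)$, apply Cauchy--Schwarz to the vector $(D^\Sigma f, fH)$ against $(D^\Sigma u, y)$ in $T_x\Sigma\oplus T_x^\perp\Sigma$ using $|D^\Sigma u|^2+|y|^2\le 1$, and divide by $f>0$. The paper itself does not supply a proof of this lemma but simply cites it as Lemma~3.1 of \cite{DLL22}; your argument is precisely what one would expect that reference to contain.
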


By assumption, $M$ has asymptotically nonnegative $k$-Ricci curvature with respect to a base point $o$ in $M$. Analogous to the computation \eqref{sum:sii}, we have 
$$
\sum_i S_{ii}(t)\ge (\cos^2 s-n) \lambda(d(o,\bar \gamma(t)))
$$
and
$$\sum_\alpha S_{\alpha \alpha}(t)\ge (\sin^2 s-m) \lambda(d(o,\bar \gamma(t))),
$$
where $s$ is the angle between $\bar \gamma'(t)$ and $E_1(t)$. 

Let $\phi$ and $\widetilde \phi$ be defined by
\begin{equation}
\begin{aligned}
    \phi:=&e^{\frac{1}{n}\int_0^t\sum_i Q_{ii}(\tau)\textrm{d}\tau},\\
    \widetilde{\phi}:=&t e^{\frac{1}{m}\int_0^t\sum_\alpha(Q_{\alpha\alpha}(\tau)-\frac{1}{\tau})\textrm{d}\tau},
\end{aligned}
\end{equation}
respectively.
Then with the initial condition \eqref{initialQ}, \eqref{Riccati2} reduce to
\begin{equation}\label{ODEphi}
    \begin{aligned}
        \left\{
        \begin{array}{lll}
            \phi''\le-\frac{1}{n}\sum_i S_{ii}(t)\phi \le\frac{n-\cos^2 s}{n} \lambda(d(o,\bar \gamma(t))) \phi,\\
            \\
            \phi(0)=1,\quad\phi'(0)=\frac{1}{n}(\Delta_\Sigma u(\bar x)-\langle H(\bar x),\bar y\rangle),
        \end{array}
        \right.
    \end{aligned}
\end{equation}
and
\begin{equation}\label{ODEtphi}
    \begin{aligned}
        \left\{
        \begin{array}{lll}
            \widetilde \phi''\le-\frac{1}{m}\sum_\alpha S_{\alpha\alpha}(t)\widetilde \phi \le \frac{m-\sin^2 s}{m} \lambda(d(o,\bar \gamma(t))) \widetilde \phi ,\\
            \\
            \widetilde \phi(0)=0,\quad\widetilde\phi'(0)=1.
        \end{array}
        \right.
    \end{aligned} 
\end{equation}

Next, in order to estimate $\phi$ and $\widetilde \phi$, we should compare them with standard ODE solutions. Let $\psi_1$ and $\psi_2$ be solutions to the following ODEs 

\begin{equation}\label{ODEpsi1}
    \begin{aligned}
        \left\{
        \begin{array}{lll}
            \psi_1''=\frac{n-\cos^2 s}{n} \lambda(d(o,\bar \gamma(t)))\psi_1,\\
            \\
            \psi_1(0)=0,\quad\psi_1'(0)=1,
        \end{array}
        \right.
    \end{aligned} 
\end{equation}

\begin{equation}\label{ODEpsi2}
    \begin{aligned}
        \left\{
        \begin{array}{lll}
            \psi_2''=\frac{n-\cos^2 s}{n} \lambda(d(o,\bar \gamma(t)))\psi_2,\\
            \\
            \psi_2(0)=1,\quad\psi_2'(0)=0,
        \end{array}
        \right.
    \end{aligned} 
\end{equation}
respectively. Then the function $\psi:=\psi_1(\frac{1}{n}(\Delta_\Sigma u(\bar x)-\langle H(\bar x),\bar y\rangle))+\psi_2$ is the solution to
\begin{equation}\label{ODEpsi}
    \begin{aligned}
        \left\{
        \begin{array}{lll}
            \psi''=\frac{n-\cos^2 s}{n} \lambda(d(o,\bar \gamma(t)))\psi,\\
            \\
            \psi(0)=1,\quad\psi'(0)=\frac{1}{n}(\Delta_\Sigma u(\bar x)-\langle H(\bar x),\bar y\rangle).
        \end{array}
        \right.
    \end{aligned} 
\end{equation}
By a standard ODE comparison result (cf. \cite{CW06}, Lemma 2.4A), we have
\begin{equation}\label{comparison1}
    \frac{\phi'}{\phi}\le\frac{\psi'}{\psi}.
\end{equation}
Similarly, let $\widetilde\psi$ be the solution to
\begin{equation}\label{ODEtpsi}
    \begin{aligned}
        \left\{
        \begin{array}{lll}
            \widetilde\psi''=\frac{m-\sin^2 s}{m} \lambda(d(o,\bar \gamma(t)))\widetilde\psi,\\
            \\
            \widetilde\psi(0)=0,\quad\widetilde\psi'(0)=1.
        \end{array}
        \right.
    \end{aligned} 
\end{equation}
Then the standard ODE comparison (cf. \cite{CW06}, Lemma 2.4A) gives
\begin{equation}\label{comparison2}
    \frac{\widetilde\phi'}{\widetilde\phi}\le\frac{\widetilde\psi'}{\widetilde\psi}.
\end{equation}

Now taking \eqref{comparison1}, \eqref{comparison2} into consideration, by definition of $\phi$ and  $\widetilde\phi$, we have
\begin{equation}\label{logdetP}
    \frac{d}{dt}\log \det P(t)=\Tr Q(t)=n\frac{\phi'}{\phi}+m\frac{\widetilde\phi'}{\widetilde\phi} \le n\frac{\psi'}{\psi}+m\frac{\widetilde\psi'}{\widetilde\psi},
\end{equation}
for all $t\in(0,r)$.
Integrating \eqref{logdetP} over $[\epsilon,t]$ for $0<\epsilon<t$ and letting $\epsilon\to 0^+$, we obtain
\begin{equation}\label{detP}
    \begin{aligned}
        &|\det \bar D\Phi_t(\bar x, \bar y)|=\det P(t)\\
        &\le\psi(t)^n\widetilde\psi(t)^m=\left[\frac{\psi_2}{\psi_1}(t)+\frac{1}{n}(\Delta_\Sigma u(\bar x)-\langle H(\bar x),\bar y\rangle)\right]^n\psi_1(t)^n\widetilde\psi(t)^m,
    \end{aligned}
\end{equation}
for all $t\in(0,r)$.
Moreover, a standard ODE comparison of \eqref{ODEpsi1}, \eqref{ODEpsi2} and \eqref{ODEtpsi} gives
\begin{equation}\label{comparison3}
\begin{aligned}
    \psi_1(t)&\le t e^{\int_0^t \tau(\frac{n-\cos^2 s}{n} \lambda(d(o,\bar \gamma(\tau))))\textrm{d}\tau},\\
    \widetilde\psi(t)&\le t e^{\int_0^t \tau(\frac{m-\sin^2 s}{m} \lambda(d(o,\bar \gamma(\tau))))\textrm{d}\tau},\\
    \frac{\psi_2}{\psi_1}(t)&\le \int_0^t\left(\frac{n-\cos^2 s}{n} \lambda(d(o,\bar \gamma(\tau)))\right)\textrm{d}\tau+\frac{1}{t}.
\end{aligned}
\end{equation}

Furthermore, we have
\begin{equation}\label{comparison4}
        \int_0^t\lambda(d(o,\bar \gamma(\tau)))\textrm{d}\tau\le 2 b_1
\end{equation}
and
\begin{equation}\label{comparison5}
    \begin{aligned}
	\int_0^t \tau\lambda(d(o,\bar \gamma(\tau)))\textrm{d}\tau&\le\int_0^t \tau\lambda(|d(o,\bar x)-d(\bar x,\bar\gamma(\tau))|)\textrm{d}\tau\\
        &\le\int_0^t \tau\lambda(|d(o,\bar x)-\tau|)\textrm{d}\tau\\
        &\le2r_0 b_1+b_0.
    \end{aligned}
\end{equation}

By substituting \eqref{comparison3}, \eqref{comparison4} and \eqref{comparison5} into \eqref{detP}, together with Lemma \ref{lemma1'} we have
\begin{equation}
\begin{aligned}
    &|\det \bar D\Phi_t(\bar x, \bar y)|=\det P(t)\\
    &\le \left[2 b_1+\frac{1}{t}+\frac{1}{n}(\Delta_\Sigma u(\bar x) -\langle H(\bar x), \bar y\rangle)\right]^n t^{n+m}e^{(n+m-1)(2r_0b_1+b_0)}\\
    &\le t^m(1+t f(\bar x)^{\frac{1}{n-1}})^n e^{(n+m-1)(2r_0 b_1+b_0)}.
\end{aligned}
\end{equation}
Analogous to the proof of Theorem \ref{thm1}, Lemma \ref{lemma2} tells us 
\begin{equation}
\begin{aligned}
    &|\{p\in M:\sigma r<d(x,p)<r \textrm{ for all } x\in\Sigma\}|\\
    &\le\frac{m}{2}|B^m|(1-\sigma^2)e^{(n+m-1)(2r_0b_1+b_0)}\int_\Omega r^m (1+r f(x)^{\frac{1}{n-1}})^n\textrm{d}vol(x).
\end{aligned}
\end{equation}

Now dividing both sides by $(n+m)\int_0^r h(t)^{n+m-1}\textrm{d}t$ and letting $r\to\infty$, we have
\begin{equation}
    \begin{aligned}
        &|B^{n+m}|(1-\sigma^{n+m})\theta_h\\
        &\le\frac{m}{2}|B^m|(1-\sigma^2)e^{(n+m-1)(2r_0b_1+b_0)}\int_\Omega f(x)^{\frac{n}{n-1}}\textrm{d}vol(x)\lim_{r\to\infty}\frac{r^{n+m}}{(n+m)\int_0^r h(t)^{n+m-1}\textrm{d}t}\\
        &\le\frac{m}{2}|B^m|(1-\sigma^2)\left(\frac{e^{2r_0 b_1+b_0}}{1+b_0}\right)^{n+m-1}\int_\Omega f(x)^{\frac{n}{n-1}}\textrm{d}vol(x).\\  \end{aligned}
\end{equation}
Here we use the fact that $h(t)\ge t$ and $\underline\lim_{t\to \infty} h'(t)\ge 1+b_0$ in the last line.
Then dividing both sides by $1-\sigma$ and letting $\sigma\to 1$, we have
\begin{equation}
\begin{aligned}
(n+m)|B^{n+m}|\theta_h&\le m|B^m|\left(\frac{e^{2r_0 b_1+b_0}}{1+b_0}\right)^{n+m-1}\int_\Omega f(x)^{\frac{n}{n-1}}\textrm{d}vol(x)\\
&\le m|B^m|\left(\frac{e^{2r_0 b_1+b_0}}{1+b_0}\right)^{n+m-1}\int_\Sigma f(x)^{\frac{n}{n-1}}\textrm{d}vol(x).
\end{aligned}
\end{equation}
Finally, it follows from the scaling assumption \eqref{scaling2} that
\begin{equation}
    \begin{aligned}
        &\int_\Sigma \sqrt{|D^\Sigma f|^2+f^2 |H|^2}+2n b_1\int_\Sigma f + \int_{\partial\Sigma}f = n\int_\Sigma f^{\frac{n}{n-1}}\\
        &\ge n \left(\frac{(n+m)|B^{n+m}|}{m|B^m|}\right)^{\frac{1}{n}}\theta_h^{\frac{1}{n}}\left(\frac{1+b_0}{e^{2r_0b_1+b_0}}\right)^{\frac{n+m-1}{n}}\left(\int_\Sigma f^{\frac{n}{n-1}}\right)^{\frac{n-1}{n}}.
    \end{aligned}
\end{equation}
\end{proof}
\begin{ack}
The authors were partially supported by the National Natural Science Foundation of China under grants No.~11831005 and No.~12061131014. 
They would like to thank Professor Chao Qian for his helpful suggestions. They also thank Kai-Hsiang Wang for bringing their attention to \cite{KM18, wang22} and for helpful comments on the earlier version of this paper.
\end{ack}
%----------------------------------------------

\bibliographystyle{abbrv}
\bibliography{refs}

\begin{thebibliography}{10}

\bibitem{Abresch85}
U.~Abresch.
\newblock Lower curvature bounds, {T}oponogov's theorem, and bounded topology.
\newblock {\em Ann. Sci. \'{E}cole Norm. Sup. (4)}, 18(4):651--670, 1985.

\bibitem{BC64}
R.~L. Bishop and R.~J. Crittenden.
\newblock {\em Geometry of manifolds}.
\newblock Pure and Applied Mathematics, Vol. XV. Academic Press, New
  York-London, 1964.

\bibitem{Brendle21}
S.~Brendle.
\newblock The isoperimetric inequality for a minimal submanifold in {E}uclidean
  space.
\newblock {\em J. Amer. Math. Soc.}, 34(2):595--603, 2021.

\bibitem{Brendle22}
S.~Brendle.
\newblock Sobolev inequalities in manifolds with nonnegative curvature.
\newblock {\em Communications on Pure and Applied Mathematics}, 2022.

\bibitem{Cabre08}
X.~Cabr\'{e}.
\newblock Elliptic {PDE}'s in probability and geometry: symmetry and regularity
  of solutions.
\newblock {\em Discrete Contin. Dyn. Syst.}, 20(3):425--457, 2008.

\bibitem{CW06}
J.~Cao and Y.~Wang.
\newblock {\em A Concise Course of Morden Riemannian Geometry (in chinese)}.
\newblock China Science Publishing and Media, Beijing, 2006.

\bibitem{Carleman21}
T.~Carleman.
\newblock Zur {T}heorie der {M}inimalfl\"{a}chen.
\newblock {\em Math. Z.}, 9(1-2):154--160, 1921.

\bibitem{Chahine20}
Y.~K. Chahine.
\newblock Volume estimates for tubes around submanifolds using integral
  curvature bounds.
\newblock {\em J. Geom. Anal.}, 30(4):4071--4091, 2020.

\bibitem{Chavel78}
I.~Chavel.
\newblock On {A}. {H}urwitz' method in isoperimetric inequalities.
\newblock {\em Proc. Amer. Math. Soc.}, 71(2):275--279, 1978.

\bibitem{Choe05}
J.~Choe.
\newblock Isoperimetric inequalities of minimal submanifolds.
\newblock In {\em Global theory of minimal surfaces}, volume~2, pages 325--369.
  American Mathematical Society, Providence, RI; Clay Mathematics Institute,
  Cambridge, MA, 2005.

\bibitem{DLL22}
Y.~Dong, H.~Lin, and L.~Lu.
\newblock Sobolev inequalities in manifolds with asymptotically nonnegative
  curvature.
\newblock {\em arXiv preprint arXiv:2203.14624}, 2022.

\bibitem{Feinberg77}
J.~M. Feinberg.
\newblock The isoperimetric inequality for doubly-connected minimal surfaces in
  {${\bf R}^{n}$}.
\newblock {\em J. Analyse Math.}, 32:249--278, 1977.

\bibitem{Galloway81}
G.~J. Galloway.
\newblock Some results on the occurrence of compact minimal submanifolds.
\newblock {\em Manuscripta Math.}, 35(1-2):209--219, 1981.

\bibitem{GTbook}
D.~Gilbarg and N.~S. Trudinger.
\newblock {\em Elliptic partial differential equations of second order}.
\newblock Classics in Mathematics. Springer-Verlag, Berlin, 2001.
\newblock Reprint of the 1998 edition.

\bibitem{XG12}
J.-R. Gu and H.-W. Xu.
\newblock The sphere theorems for manifolds with positive scalar curvature.
\newblock {\em J. Differential Geom.}, 92(3):507--545, 2012.

\bibitem{Johne21}
F.~Johne.
\newblock Sobolev inequalities on manifolds with nonnegative
  bakry-$\backslash$'emery ricci curvature.
\newblock {\em arXiv preprint arXiv:2103.08496}, 2021.

\bibitem{KM18}
C.~Ketterer and A.~Mondino.
\newblock Sectional and intermediate {R}icci curvature lower bounds via optimal
  transport.
\newblock {\em Adv. Math.}, 329:781--818, 2018.

\bibitem{LSY84}
P.~Li, R.~Schoen, and S.-T. Yau.
\newblock On the isoperimetric inequality for minimal surfaces.
\newblock {\em Ann. Scuola Norm. Sup. Pisa Cl. Sci. (4)}, 11(2):237--244, 1984.

\bibitem{MS73}
J.~H. Michael and L.~M. Simon.
\newblock Sobolev and mean-value inequalities on generalized submanifolds of
  {$R^{n}$}.
\newblock {\em Comm. Pure Appl. Math.}, 26:361--379, 1973.

\bibitem{Mouille22}
L.~Mouill\'{e}.
\newblock Torus actions on manifolds with positive intermediate {R}icci
  curvature.
\newblock {\em J. Lond. Math. Soc. (2)}, 106(4):3792--3821, 2022.

\bibitem{OS75}
R.~Osserman and M.~Schiffer.
\newblock Doubly-connected minimal surfaces.
\newblock {\em Arch. Rational Mech. Anal.}, 58(4):285--307, 1975.

\bibitem{Reid59}
W.~T. Reid.
\newblock The isoperimetric inequality and associated boundary problems.
\newblock {\em J. Math. Mech.}, 8:897--905, 1959.

\bibitem{RW22}
P.~Reiser and D.~J. Wraith.
\newblock Positive intermediate ricci curvature on fibre bundles.
\newblock {\em arXiv preprint arXiv:2211.14610}, 2022.

\bibitem{Shen89}
Z.~M. Shen.
\newblock A sphere theorem for manifolds of positive {R}icci curvature.
\newblock {\em Indiana Univ. Math. J.}, 38(1):229--233, 1989.

\bibitem{Trudinger94}
N.~S. Trudinger.
\newblock Isoperimetric inequalities for quermassintegrals.
\newblock {\em Ann. Inst. H. Poincar\'{e} C Anal. Non Lin\'{e}aire},
  11(4):411--425, 1994.

\bibitem{wang22}
K.-H. Wang.
\newblock Optimal transport approach to michael-simon-sobolev inequalities in
  manifolds with intermediate ricci curvature lower bounds.
\newblock {\em arXiv preprint arXiv:2209.06796}, 2022.

\bibitem{Wu87}
H.-H. Wu.
\newblock Manifolds of partially positive curvature.
\newblock {\em Indiana Univ. Math. J.}, 36(3):525--548, 1987.

\end{thebibliography}
\end{document}